%
%
%

\documentclass[graybox]{svmult}


\usepackage{mathptmx}       
\usepackage{helvet}         
\usepackage{courier}        
\usepackage{type1cm}        
%
\usepackage{makeidx}         
\usepackage{graphicx}        
\usepackage{multicol}        
\usepackage[bottom]{footmisc}


\makeindex             

\usepackage{amsmath}
\usepackage{color}
\usepackage{amssymb}
\usepackage{latexsym}
\usepackage{url}
\usepackage{multirow}

\def\d{\displaystyle}
\def\p{\partial}

\def\e{\varepsilon}

\def\R{{\bf R}}
\def\N{{\bf N}}

\def\wt{\widetilde}
\def\ints{\int_{\R^n}}

\def\l{\lambda}
\newcommand{\norm}[1]{\left\lVert#1\right\rVert}
\DeclareMathOperator*{\supp}{supp}


\begin{document}

\title*{Wave-like blow-up for semilinear wave equations
with scattering damping and negative mass term}
\titlerunning{Wave-like blow-up for semilinear wave equations}
\author{Ning-An Lai, Nico Michele Schiavone and Hiroyuki Takamura}
\institute{Ning-An Lai \at Institute of Nonlinear Analysis and Department of Mathematics, Lishui University, Lishui City 323000, China, \email{ninganlai@lsu.edu.cn}
\and Nico Michele Schiavone \at M.Sc. student, Department of Mathematics, University of Bari, Via E. Orabona 4, Bari 70126, Italy, \email{n.schiavone5@studenti.uniba.it}
\and Hiroyuki Takamura \at Mathematical Institute, Tohoku University, Aoba, Sendai 980-8578, Japan, \email{hiroyuki.takamura.a1@tohoku.ac.jp}}
%
%
\maketitle

\abstract*{In this paper we establish blow-up results and lifespan estimates for semilinear wave equations with scattering damping and negative mass term for subcritical power, which is the same as that of the corresponding problem without mass term, and also the same as that of the corresponding problem without both damping and mass term. For this purpose, we have to use the comparison argument twice, due to the damping and mass term, in additional to a key multiplier. Finally, we get the desired results by an iteration argument.}

\abstract{In this paper we establish blow-up results and lifespan estimates for semilinear wave equations with scattering damping and
negative mass term for subcritical power, which are the same as that of the corresponding problem without mass term, and also the same as that of the corresponding problem without both damping and mass term. For this purpose, we have to use the comparison argument twice, due to the damping and mass term, in additional to a key multiplier. Finally, we get the desired results by an iteration argument.}

\section{Introduction}
In this paper, we consider the Cauchy problem for semilinear wave equations with scattering damping and negative mass term
\begin{equation}
\label{eq:main_problem}
\left\{
\begin{aligned}
& u_{tt} - \Delta u + \frac{\mu_1}{(1+t)^\beta} u_t - \frac{\mu_2}{(1+t)^{\alpha+1}} u = |u|^p, \quad\text{in $\R^n\times[0, T)$}, \\
& u(x,0)=\e f(x), \quad u_t(x,0)=\e g(x), \quad x\in\R^n,
\end{aligned}
\right.
\end{equation}
where  $\mu_1, \mu_2 > 0$, $\alpha>1$, $\beta >1$, $n\in\N$ and $\e>0$ is a ``small'' parameter.
\par
We call the term $\mu_1u_t/(1+t)^\beta\,(\beta>1)$ scattering damping, due to the reason that the solution of the following Cauchy problem
\begin{equation}
\label{linear}
\left\{
\begin{aligned}
& u^0_{tt}-\Delta u^0+\frac{\mu}{(1+t)^\beta}u^0_t=0,
\quad \text{in $\R^n \times[0,\infty)$},\\
& u^0(x,0)=u_1(x), \quad u^0_t(x,0)=u_2(x), \quad x\in\R^n,
\end{aligned}
\right.
\end{equation}
scatters to that of the free wave equation when $\beta>1$ and $t\rightarrow \infty$. In fact, according to the works of Wirth \cite{Wir1, Wir2, Wir3}, we may classify the damping for different values of $\beta$ into four cases, as shown in the next table.

\begin{table}
\centering
	\begin{tabular}{p{3cm}p{4.5cm}}
		\hline\noalign{\smallskip}
		Range of $\beta$ & Classification  \\
		\noalign{\smallskip}\svhline\noalign{\smallskip}
		$\beta\in(-\infty,-1)$ & overdamping\\
		$\beta\in[-1,1)$ & effective\\
		\multirow{2}{*}{$\beta=1$} & scaling invariant\\
		& \quad if $\mu\in(0,1)\Rightarrow$ non-effective\\
		$\beta\in(1,\infty)$ & scattering\\
		\noalign{\smallskip}\hline\noalign{\smallskip}
	\end{tabular}
\end{table}

If we come to the nonlinear problem with power nonlinearity, thus
\begin{equation}
\label{nonlinear}
\left\{
\begin{aligned}
& u_{tt}-\Delta u+\frac{\mu}{(1+t)^\beta}u_t=|u|^p,
\quad \text{in $R^n \times[0,\infty)$},\\
& u(x,0)=u_1(x), \quad u_t(x,0)=u_2(x), \quad x\in\R^n,
\end{aligned}
\right.
\end{equation}
we want to determine the long time behaviour of the solution according to the different value of $p$, $n$ and even $\mu$. Ikeda and Wakasugi \cite{IWnew} proved global existence for \eqref{nonlinear} for all $p>1$ when $\beta<-1$. For $\beta\in [-1, 1)$, due to the work \cite{DLR13, LNZ12, N1, WY17, IO, FIW, II}, we know that problem \eqref{nonlinear} admits a critical power $p_F(n):=1+2/n$ (Fujita power), which means that
for $p\in (1, p_F(n)]$ the solution will blow up in a finite time, while for $p\in \left(p_F(n), \infty\right)$ we have global existence. Obviously, in this case
the critical is exactly the same as that of the Cauchy problem of semilinear heat equation
\[
u_t-\Delta u=u^p,
\]
and so we call it admits \lq\lq heat-like" behaviour.

For the case $\beta=1$ in \eqref{nonlinear}, we say that the damping is scale invariant, due to the reason that the equation in the corresponding linear problem \eqref{linear} is invariant under the following scaling transformation
\[
\wt{u^0}(x,t):=u^0(\sigma x, \sigma(1+t)-1),\ \sigma>0.
\]
It is a bit sophisticated for the scale invariant nonlinear problem \eqref{nonlinear}, since the size of the positive constant $\mu$ will also
have an effect on the long time behaviour of the solution. Generally speaking, according to the known results (\cite{DABI, DL1, DLR15, WY14_scale, LTW, IS, TL1, TL2}), it is believed that if $\mu$ is large enough, then the critical power is related to the Fujita power, while
if $\mu$ is relatively small, then the critical power is related to the Strauss power, i.e. $p_S(n)$, which is denoted to be the positive root of the following quadratic equation
\[
\gamma(p,n):=2+(n+1)p-(n-1)p^2=0,
\]
and which is also the critical power of the small data Cauchy problem of the semilinear wave equation
\[
u_{tt}-\Delta u=|u|^p.
\]
It means that for relatively small $\mu$ we have \lq\lq wave-like" behaviour. Unfortunately, we are not clear of the exact threshold determined by the value $\mu$ between the \lq\lq heat-like" and \lq\lq wave-like"
phenomenon till now.

For the scattering case $(\beta>1)$, one expects that problem \eqref{nonlinear} admits the long time behaviour as that of the corresponding problem
without damping. In \cite{LT}, Lai and Takamura obtained the blow-up results for
\[
1<p<
\begin{cases}
p_S(n) &\text{for $n\ge2$},\\
\infty &\text{for $n=1$}
\end{cases}
\]
and the upper bound of the lifespan estimate
\[
T\leq C\varepsilon^{-2p(p-1)/\gamma(p, n)}.
\]
What is more, when $n=1, 2$ and
\[
\int_{\R^n}g(x)dx\neq 0,
\]
they established an improved upper bound of the lifespan for $1<p<2$, $n=2$ and $p>1$, $n=1$. However, it remains to determine the exact critical power for \eqref{nonlinear} with $\beta>1$.

Recently, the small data Cauchy problem for semilinear wave equation with scale-invariant damping and mass and power non-linearity, i.e.,
\begin{equation}
\label{nonlinearmass}
\left\{
\begin{aligned}
& u_{tt} - \Delta u + \frac{\mu_1}{1+t} u_t + \frac{\mu_2^2}{(1+t)^{2}} u = |u|^p,
\quad \text{in $\R^n \times[0,\infty)$},\\
& u(x,0)=u_1(x), \quad u_t(x,0)=u_2(x), \quad x\in\R^n,
\end{aligned}
\right.
\end{equation}
attracts more and more attention. Denote
\begin{equation}
\label{eq:def_delta}
\delta:=(\mu_1-1)^2-4\mu_2^2.
\end{equation}
Then in \cite{NPR} and \cite{Pal} a blow-up result was established for
\begin{equation*}
1<p\leq p_{F}\left(n+\frac{\mu_1-1-\sqrt{\delta}}{2}\right)
\end{equation*}
assuming $\delta\geq 0$, by using two different approaches. Furthermore, in \cite{NPR} they improved the result for $\delta=1$ to
\[
1<p\leq \max\left\{p_S(n+\mu_1),\ p_F\left(n+\frac{\mu_1}{2}-1\right)\right\}.
\]
Recently, Palmieri and Reissig \cite{P-R} generalized the blow-up result for $n\geq 1$ and $\delta\in (0, 1]$ to the following power:
\[
\begin{cases}
p<p_{\mu_1, \mu_2}(n):=\max\left\{p_S(n+\mu_1),\ p_F\left(n+\frac{\mu_1}{2}-\frac{\sqrt{\delta}}{2}\right)\right\},\\
p=p_{\mu_1, \mu_2}(n)=p_F\left(n+\frac{\mu_1}{2}-\frac{\sqrt{\delta}}{2}\right),\\
p=p_{\mu_1, \mu_2}(n)=p_S(n+\mu_1), \quad\text{for $n=2$}.
\end{cases}
\]
We note that a transform by $v:=(1+t)^{\mu_1/2}u$ changes the equation in (\ref{nonlinearmass})
into
\[
v_{tt}-\Delta v+\frac{1-\delta}{4(1+t)^2}v=\frac{|v|^p}{(1+t)^{\mu_1(p-1)/2}},
\]
so that the assumption of $\delta\in(0,1]$ implies the non-negativeness of the mass term in this equation.

In this paper, we are going to study the small data Cauchy problem of semilinear wave equations with power nonlinearity, scattering damping and mass term with negative sign, thus, problem \eqref{eq:main_problem}. Blow-up results and lifespan estimates will be established for $1<p<p_S(n)$, which are the same as that in the work \cite{LT}. We could say that we experience a double phenomenon of scattering, due to the damping term and the mass term.
For the proof, we will borrow the idea from \cite{LT}, by introducing a key multiplier to absorb the damping term and establishing an iteration frame. However, we have to deal with the mass term. Due to the negative sign, we use a comparison argument to eliminate the effect from the mass term.
Although the calculations in this work hold for any mass exponent $\alpha\in\R$, we suppose that it satisfies $\alpha>1$ because otherwise we have shorter lifespan estimates due to the effect of the negative mass term. This analysis will appear in our forthcoming paper.

\section{Main Result}
\par\quad
Before the statement of our main results, we first denote the energy and weak solutions of problem \eqref{eq:main_problem}.
\begin{definition}
	We say that $u$ is an energy solution of \eqref{eq:main_problem} over $[0,T)$ if
	\begin{equation}
	u \in C([0,T), H^1(\R^n)) \cap C^1([0,T),L^2(\R^n)) \cap C((0,T), L^p_{loc}(\R^n))
	\end{equation}
	satisfies $u(x,0)=\e f(x)$ in $H^1(\R^n)$ and $u_t(x,0)=\e g(x)$ in $L^2(\R^n)$, and
	\begin{equation}
	\label{eq:energy_solution}
	\begin{split}
	&\int_{\R^n}u_t(x,t)\phi(x,t)dx-\int_{\R^n}\e g(x)\phi(x,0)dx\\
	&+\int_0^tds\int_{\R^n}\left\{-u_t(x,s)\phi_t(x,s)+\nabla u(x,s)\cdot\nabla\phi(x,s)\right\}dx \\
	&+\int_0^tds\int_{\R^n}\frac{\mu_1 }{(1+s)^{\beta}}u_t(x,s) \phi(x,s)dx - \int_0^t ds \ints \frac{\mu_2}{(1+s)^{\alpha+1}}u(x,s)\phi(x,s) \\
	= & \int_0^tds\int_{\R^n}|u(x,s)|^p\phi(x,s)dx
	\end{split}
	\end{equation}
	with any test function $\phi \in C_0^\infty(\R^n \times [0,T))$ and for any $t\in[0,T)$.
\end{definition}

Employing the integration by part in the above equality and letting $t\to T$, we got the definition of the weak solution of \eqref{eq:main_problem}, that is
\begin{equation}
\begin{split}
&\int_{\R^n\times[0,T)}
u(x,s)\bigg\{\phi_{tt}(x,s)-\Delta \phi(x,s)
-\frac{\p}{\p s} \left(\frac{\mu_1}{(1+s)^{\beta}}\phi(x,s)\right)\\
& - \frac{\mu_2}{(1+s)^{\alpha+1}} \phi(x,s) \bigg\}dxds\\
=&\ \int_{\R^n}\mu_1 \e f(x)\phi(x,0)dx -\int_{\R^n}\e f(x)\phi_t(x,0)dx +\int_{\R^n}\e g(x)\phi(x,0)dx\\ &+\int_{\R^n\times[0,T)}|u(x,s)|^p\phi(x,s)dxds.
\end{split}
\end{equation}

\begin{definition}
	As in the introduction, set
	\[
	\gamma(p,n) := 2+(n+1)p - (n-1)p^2
	\]
	and, for $n\ge2$, define $p_S(n)$ the positive root of the quadratic equation $\gamma(p,n)=0$, the so-called Strauss exponent, that is
	\[
	p_S(n) = \frac{n+1+\sqrt{n^2+10n-7}}{2(n-1)}.
	\]
	Note that if $n=1$, then $\gamma(p,1)=2+2p$ and we can set $p_S(1):=+\infty$.
\end{definition}
\par
Now we announce our main results.
\begin{theorem}\label{thm1}
	Let $n=1$ and $p>1$, or $n \geq 2$ and $1<p<p_S(n)$. Assume that both $f \in H^1(R^n)$ and $g\in L^2(\R^n)$ are non-negative, and at least one of them does not vanish identically. Suppose that $u$ is an energy solution of \eqref{eq:main_problem} on $[0,T)$ that satisfies
	\begin{equation}
	\label{support}
	\supp u \subset\{(x,t)\in\R^n\times[0,\infty) \colon |x|\le t+R\}
	\end{equation}
	with some $R\ge1$.
	Then, there exists a constant $\e_0=\e_0(f,g,n,p,\mu_1, \beta, R)>0$ which is independent of $\mu_2$,
	such that $T$ has to satisfy
	\begin{equation}
	\label{eq:1}
	T\leq C\varepsilon^{-2p(p-1)/\gamma(p, n)}
	\end{equation}
	for $0<\e\le\e_0$, where $C$ is a positive constant independent of $\e$.
\end{theorem}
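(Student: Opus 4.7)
The plan is to follow the test-function/iteration strategy of Lai--Takamura \cite{LT} for the scattering-damping problem, adapted to accommodate the negative mass term via comparison. Set
\begin{equation*}
F(t):=\int_{\R^n}u(x,t)\,dx,
\end{equation*}
and test the weak form of \eqref{eq:main_problem} against a smooth cutoff equal to $1$ on a large ball containing the spatial support of $u$ (justified by \eqref{support}). After taking limits this reduces the PDE to the ODE
\begin{equation*}
F''(t)+\frac{\mu_1}{(1+t)^\beta}F'(t)-\frac{\mu_2}{(1+t)^{\alpha+1}}F(t)=\int_{\R^n}|u(x,t)|^p\,dx,
\end{equation*}
with nonnegative data $F(0)=\e\int_{\R^n}f\,dx$ and $F'(0)=\e\int_{\R^n}g\,dx$, not both vanishing. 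The key multiplier is
\begin{equation*}
m(t):=\exp\!\left(\int_0^t\frac{\mu_1}{(1+s)^\beta}\,ds\right),
\end{equation*}
which, because $\beta>1$, satisfies $m_0\le m(t)\le m_1$ for positive constants depending only on $\mu_1,\beta$. Multiplication by $m(t)$ converts the damping term into an exact derivative $(mF')'$.

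First I would use a comparison argument to show that, despite the destabilizing sign of the mass, $F(t)$ stays bounded below by a positive multiple of $\e$ on $[0,T)$: comparing with the linear (no nonlinearity) equation and exploiting the integrability of $(1+s)^{-\alpha-1}$ afforded by $\alpha>1$, the cumulative effect of the mass is bounded by a constant rather than growing exponentially, yielding $F(t)\ge c\e$ with $c>0$ independent of $\mu_2$. Next, the support assumption \eqref{support} combined with H\"older's inequality gives the standard nonlinear lower bound
\begin{equation*}
\int_{\R^n}|u(x,t)|^p\,dx\ge C(t+R)^{-n(p-1)}F(t)^p.
\end{equation*}
A second comparison argument then lets me discard the (now nonnegative) source $\mu_2 m(t)(1+t)^{-\alpha-1}F(t)$ from the right-hand side after integrating $(mF')'$ twice, producing the clean iteration frame
\begin{equation*}
F(t)\ge K\int_0^t\int_0^s(\tau+R)^{-n(p-1)}F(\tau)^p\,d\tau\,ds.
\end{equation*}

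From here the argument proceeds by Kato-type slicing exactly as in \cite{LT}: inserting $F(t)\ge c\e$ into the frame yields an improved bound of the form $F(t)\ge D_1(t+R)^{-A_1}(t-L)^{B_1}$, and iterating gives sequences $D_j,A_j,B_j$ satisfying explicit recursions. For $1<p<p_S(n)$ (automatic when $n=1$) one has $\gamma(p,n)>0$, which is precisely what drives $\log D_j\to+\infty$ once $t\gtrsim\e^{-2p(p-1)/\gamma(p,n)}$, forcing \eqref{eq:1}. The main obstacle I anticipate is executing the two comparison arguments so that the resulting constants are genuinely independent of $\mu_2$, as claimed: the wrong-sign mass is a source for which a crude Gronwall estimate would produce $e^{O(\mu_2)}$ factors and spoil the advertised uniformity of $\e_0$. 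Exploiting the convergence of $\int_0^\infty(1+s)^{-\alpha-1}\,ds$ under the hypothesis $\alpha>1$ should make a bounded (rather than exponential) comparison possible, but propagating this carefully through the iteration to isolate constants depending only on $f,g,n,p,\mu_1,\beta,R$ is the delicate part.
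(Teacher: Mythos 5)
Your setup---the functional $F$, the multiplier $m(t)=\exp\bigl(\mu_1\int_0^t(1+s)^{-\beta}ds\bigr)$, the two comparison arguments (one to get positivity of $F$ so that the wrong-sign mass term becomes a nonnegative source that can simply be dropped, one to discard it from the iteration frame), and the frame $F(t)\ge K\int_0^t\int_0^s(1+\tau)^{-n(p-1)}F(\tau)^p\,d\tau\,ds$---all match the paper, and your worry about $e^{O(\mu_2)}$ factors resolves itself exactly as you suggest: once $F>0$ is known, the mass term is discarded rather than estimated, so no Gronwall factor ever appears. However, there is a genuine gap in the final step: the seed $F(t)\ge c\e$ is \emph{not} strong enough to produce the Strauss-exponent result. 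Feeding $F\ge c\e$ into the frame gives a first iterate with $D_1\sim\e^p$, $A_1=n(p-1)$, $B_1=2$, and the quantity that must be positive for the iteration to blow up is then
\begin{equation*}
B_1+\frac{2}{p-1}-A_1-n=\frac{p\,(n+2-np)}{p-1},
\end{equation*}
which is positive only for $p<p_F(n)=1+2/n$, not for the full range $p<p_S(n)$; moreover even in that range the resulting lifespan exponent is not $2p(p-1)/\gamma(p,n)$. Your assertion that ``$\gamma(p,n)>0$ is precisely what drives $\log D_j\to+\infty$'' does not hold for this seed.

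The missing ingredient is the second, \emph{weighted} functional $F_1(t)=\int_{\R^n}u(x,t)\psi_1(x,t)\,dx$ built on the Yordanov--Zhang test function $\psi_1(x,t)=e^{-t}\phi_1(x)$. The paper runs a separate comparison argument (again using the multiplier $m$) to show $F_1(t)\ge c\,m(0)C_{f,g}\,\e$ for $t\ge1$, and then combines H\"older's inequality with the Yordanov--Zhang estimate $\int_{|x|\le t+R}\psi_1^{p/(p-1)}dx\le C(1+t)^{(n-1)\{1-p/(2(p-1))\}}$ to obtain
\begin{equation*}
\int_{\R^n}|u(x,t)|^p\,dx\ \ge\ C\e^p(1+t)^{(n-1)(1-p/2)},
\end{equation*}
which is far stronger than the bound $C\e^p(1+t)^{-n(p-1)}$ your seed provides. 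Integrating this twice gives the first iterate $F(t)\gtrsim\e^p(1+t)^{-(n-1)p/2}(t-1)^{n+1}$, i.e.\ $A_1=(n-1)p/2$, $B_1=n+1$, for which $B_1+2/(p-1)-A_1-n=\gamma(p,n)/\{2(p-1)\}$; only then does $\gamma(p,n)>0$ govern the blow-up and yield $T\le C\e^{-2p(p-1)/\gamma(p,n)}$. Without this weighted-functional step your argument proves a Fujita-type result, not Theorem \ref{thm1}.
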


In low dimensions $(n=1, 2)$, with some additional hypothesis, we may have improvements on the lifespan estimates as follows.

\begin{theorem}
	\label{thm2}
	Let $n=2$ and $1<p<2$.
	Assume that both $f \in H^1(R^2)$ and $g\in L^2(\R^2)$ are non-negative and that $g$ does not vanish identically. Then the lifespan estimate \eqref{eq:1} is replaced by
	\begin{equation}
	\label{1a}
	T\leq C\varepsilon^{-(p-1)/(3-p)}.
	\end{equation}
\end{theorem}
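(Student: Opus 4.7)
The plan is to reuse the iteration scheme from the proof of Theorem~\ref{thm1}, which adapts~\cite{LT}, and to upgrade its first step using the hypothesis $g\not\equiv 0$. Set $F(t):=\int_{\R^2}u(x,t)\,dx$; this is finite by \eqref{support}. Integrating the equation in \eqref{eq:main_problem} over $\R^2$, with $\int\Delta u\,dx=0$ by the divergence theorem, yields
\[
F''(t)+\frac{\mu_1}{(1+t)^\beta}F'(t)-\frac{\mu_2}{(1+t)^{\alpha+1}}F(t)=\int_{\R^2}|u(x,t)|^p\,dx,
\]
whose right-hand side is bounded below by $C(t+R)^{-2(p-1)}F(t)^p$ by H\"older's inequality on the support, once $F\ge 0$. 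The damping is removed by the multiplier $m(t):=\exp\bigl(\int_0^t\mu_1(1+s)^{-\beta}\,ds\bigr)$, bounded between two positive constants because $\beta>1$, so that $(mF')'=m\bigl(\tfrac{\mu_2}{(1+t)^{\alpha+1}}F+\int|u|^p\bigr)$. A first comparison argument secures $F>0$ on $[0,T)$ despite the negative mass term, using that $\alpha>1$ renders $\mu_2(1+t)^{-\alpha-1}$ integrable on $[0,\infty)$.

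The novel ingredient, which replaces the first step in the proof of Theorem~\ref{thm1}, is the linear-in-$t$ bound $F(t)\ge C\varepsilon(1+t)$ for $t\ge R$. Once $F\ge 0$, the right-hand side of the identity for $(mF')'$ is non-negative, so $m(t)F'(t)\ge m(0)F'(0)=\varepsilon\int_{\R^2} g\,dx>0$; dividing by $m(t)$ (which is bounded above) and integrating, together with $F(0)=\varepsilon\int_{\R^2} f\,dx\ge 0$, yields the claimed linear growth. That this linear-in-$t$ bound is available is specific to low dimensions ($n\le 2$): Huygens' principle fails, so the linear average $F_{\mathrm{lin}}(t)$ genuinely grows like $t\int g$ rather than being averaged to a smaller size, as would be the case for $n\ge 3$.

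Feeding $F(t)\ge A_j(1+t)^{a_j}$ into $F''\ge C(1+t)^{-2(p-1)}F^p$ and integrating twice produces the recursion $a_{j+1}=pa_j+4-2p$ with $a_0=1$, and a multiplicative constant whose logarithm obeys $\log A_{j+1}=p\log A_j-2\log a_{j+1}+O(1)$. Solving gives $a_j=p^j(3-p)/(p-1)+O(1)$ and $\log A_j=p^j\log(c\varepsilon)+O(p^j)$, so that $\log F(t)\gtrsim p^j\bigl[\log\varepsilon+\tfrac{3-p}{p-1}\log t\bigr]$ up to bounded corrections; requiring divergence in $j$ forces \eqref{1a}. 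The main obstacle is the second paragraph above: arranging the multiplier and the comparison so that neither the scattering damping nor the negative mass destroys the linear growth $\varepsilon(1+t)$, which is the entire source of the improvement. The assumption $p<2$ is precisely the range in which $(p-1)/(3-p)<2p(p-1)/\gamma(p,2)$, so that \eqref{1a} is indeed sharper than \eqref{eq:1}.
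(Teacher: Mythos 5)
Your proposal is correct and follows essentially the same route as the paper: the improvement comes from the seed $F_0(t)\ge C\e t$, obtained from $m(t)F_0'(t)\ge m(0)F_0'(0)=m(0)\e\int_{\R^2} g\,dx>0$ after the positivity of $F_0$ is secured, fed into the same double-integral iteration frame, and your recursions reproduce the paper's $\widetilde a_j,\widetilde b_j,\widetilde D_j$ up to starting the iteration one step earlier (the paper first runs one nonlinear step to get $\widetilde D_1=C\e^p$, $\widetilde b_1=p+2$, which yields the identical threshold $\e\,t^{(3-p)/(p-1)}\gtrsim 1$). One minor misattribution: the linear growth $F_0(t)\ge c\e t$ has nothing to do with Huygens' principle and holds in every dimension; what is special to $n=2$, $1<p<2$ (and $n=1$) is only that this seed improves on the $F_1$-based first step of Theorem \ref{thm1}, as the paper notes in a remark --- but this does not affect the validity of your argument.
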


\begin{theorem}\label{thm3}
	Let $n=1$ and $p>1$.
	Assume that both $f \in H^1(R^1)$ and $g\in L^2(\R^1)$ are non-negative and that $g$ does not vanish identically. Then the lifespan estimate \eqref{eq:1} is replaced by
	\begin{equation}
	\label{1b}
	T\leq C\varepsilon^{-(p-1)/2}.
	\end{equation}
\end{theorem}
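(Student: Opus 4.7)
The plan is to follow the functional approach behind Theorem \ref{thm1}, with a stronger base estimate that exploits the extra hypothesis $g\not\equiv 0$ available here. Define
\[
F(t):=\int_\R u(x,t)\,dx,
\]
which is well defined because $\supp u(\cdot,t)\subset\{|x|\le t+R\}$. Testing the weak formulation \eqref{eq:energy_solution} against a smooth cutoff equal to $1$ on the support of $u(\cdot,t)$ and using $\int_\R\Delta u\,dx=0$ produces the ODE
\[
F''(t)+\frac{\mu_1}{(1+t)^\beta}F'(t)-\frac{\mu_2}{(1+t)^{\alpha+1}}F(t)=\int_\R |u(x,t)|^p\,dx.
\]
Introducing the key multiplier $m(t):=\exp\!\left(\int_0^t\mu_1(1+\tau)^{-\beta}\,d\tau\right)$, which satisfies $1\le m(t)\le m(\infty)<\infty$ because $\beta>1$, absorbs the scattering damping into $(mF')'$ and yields
\[
(mF')'(t)=\frac{\mu_2\,m(t)}{(1+t)^{\alpha+1}}F(t)+m(t)\int_\R|u(x,t)|^p\,dx.
\]

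Next I would handle the negative mass term by comparison. Let $\overline F$ solve $(m\overline F')'=0$ with initial data $\overline F(0)=\varepsilon\int_\R f\,dx\ge0$ and $\overline F'(0)=\varepsilon\int_\R g\,dx>0$; both integrals are finite because $f$ and $g$ are supported in $\{|x|\le R\}$, and the second is strictly positive precisely because $g\ge 0$ and $g\not\equiv 0$. Since the nonlinear source is nonnegative, an ODE comparison argument forces $F(t)\ge\overline F(t)\ge 0$, so the mass contribution in the previous display is nonnegative and may be discarded to give $(mF')'(t)\ge m(t)\int_\R|u|^p\,dx\ge 0$. Integrating twice and using $m(t)\le m(\infty)$ yields the base lower bound
\[
F(t)\ge C_0\,\varepsilon\,(t-t_0),\qquad t\ge t_0,
\]
for some $t_0\ge 1$ and $C_0>0$ depending only on $\mu_1,\beta,R$ and $\int_\R g\,dx$, but independent of $\varepsilon$ and $\mu_2$. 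The linear-in-$t$ growth is the improvement over the merely constant lower bound available from $f$ alone, and is where $g\not\equiv 0$ enters decisively.

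The iteration then combines this with the one-dimensional Hölder bound $\int_\R|u(x,s)|^p\,dx\ge (2(s+R))^{1-p}F(s)^p$ coming from the finite propagation speed \eqref{support}, which gives
\[
F(t)\ge C_1\int_{t_0}^t(t-s)\,\frac{F(s)^p}{(s+R)^{p-1}}\,ds,\qquad t\ge t_0.
\]
Substituting the ansatz $F(t)\ge D_j(t-t_0)^{a_j}$ produces the recurrence $a_{j+1}=pa_j+3-p$ with $a_0=1$, solved by $a_j=(2p^j+p-3)/(p-1)\sim 2p^j/(p-1)$, while $\log D_{j+1}\ge p\log D_j-2\log a_{j+1}+O(1)$, so that $p^{-j}\log D_j$ converges to $\log\varepsilon+\mathrm{const}$ as $j\to\infty$. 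Forcing the resulting lower bound to diverge as $j\to\infty$ requires
\[
\log\varepsilon+\frac{2}{p-1}\log(t-t_0)>\mathrm{const},
\]
which inverts to the announced lifespan bound $T\le C\varepsilon^{-(p-1)/2}$.

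The main obstacle is the very first step: establishing the base bound $F(t)\ge C_0\varepsilon(t-t_0)$ in the simultaneous presence of the scattering damping and the wrong-signed mass term, so that the iteration can be started with $a_0=1$ rather than $a_0=0$. Once the multiplier $m$ and the comparison with $\overline F$ are in place, the remainder is routine but delicate bookkeeping of constants through the nonlinear iteration, specialized to $n=1$.
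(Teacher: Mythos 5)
Your proposal follows essentially the same route as the paper: the multiplier $m(t)=\exp\bigl(\int_0^t\mu_1(1+\tau)^{-\beta}d\tau\bigr)$ to absorb the scattering damping, positivity of $F$ via comparison so the negative mass term can be discarded, the improved linear base bound $F(t)\gtrsim\e t$ coming from $g\not\equiv0$, the one-dimensional H\"older step, and the slope-$p$ iteration whose combined exponent $a_j=(2p^j+p-3)/(p-1)$ matches the paper's $\widetilde b_j-\widetilde a_j$ and yields $T\le C\e^{-(p-1)/2}$. The only point to tighten is the positivity step: comparing with the homogeneous solution $\overline F$ presupposes the sign of the mass contribution, which itself depends on the sign of $F$, so one should instead run the paper's continuity argument at the first zero of $F'$ (available here since $F'(0)>0$).
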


\begin{theorem}\label{thm4}
	Let $n=p=2$. Suppose that $\alpha\le\beta$ and
	\begin{equation}
	\label{eq:cond_thm4}
	\mu_2 \ge
	\begin{cases}
	\frac{\beta\mu_1}{2} & \text{if $\alpha=\beta$}, \\
	\frac{\beta\mu_1}{2}\frac{\beta-1}{2\beta-\alpha-1}\left(4\frac{\mu_1^2}{\mu_2}\frac{\beta-\alpha}{\beta-1} \right)^{\frac{\beta-\alpha}{2\beta-\alpha-1}} & \text{if $\alpha<\beta$}.
	\end{cases}
	\end{equation}
	Assume that $f \equiv 0$ and $g \in C^2(\R^2)$ is non-negative and does not vanish identically. Suppose also that $u$ is a classical solution of \eqref{eq:main_problem} on $[0,T)$ with the support property \eqref{support}. Then, $T$ satisfies
	\begin{equation}
	\label{1c}
	T \leq C a(\e)
	\end{equation}
	where $a=a(\e)$ is a number satisfying
	\begin{equation}
	\label{def:a}
	a^2 \e^2 \log(1+a)=1.
	\end{equation}
\end{theorem}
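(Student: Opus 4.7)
The plan is to reduce the PDE to a second-order ordinary differential inequality for the spatial average $F(t) := \int_{\R^2} u(x,t)\,dx$, and then combine a key multiplier (to absorb the damping) with two comparison arguments (one for the damping, one for the negative mass) and a critical-case iteration that extracts the logarithmic correction in \eqref{def:a}. First, integrating \eqref{eq:main_problem} over $\R^2$ and killing the Laplacian via \eqref{support} gives
\begin{equation*}
F''(t) + \frac{\mu_1}{(1+t)^\beta} F'(t) - \frac{\mu_2}{(1+t)^{\alpha+1}} F(t) = \int_{\R^2} u(x,t)^2\,dx,
\end{equation*}
with $F(0)=0$, $F'(0) = \e \int_{\R^2} g\,dx > 0$, complemented by the Cauchy--Schwarz bound $\int u^2\,dx \ge (\pi(t+R)^2)^{-1}F(t)^2$ coming from \eqref{support}.

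Next I would introduce the multiplier $M(t) := \exp\!\bigl(\int_0^t \mu_1(1+s)^{-\beta}\,ds\bigr)$, which is finite and bounded away from zero since $\beta>1$. Multiplication converts the ODE into $(MF')' - \frac{\mu_2 M}{(1+t)^{\alpha+1}}F = M\int u^2\,dx$. A \textbf{first comparison} bootstraps positivity: on intervals where $F\ge 0$ the right-hand side is non-negative, so $MF'$ is non-decreasing, which together with $F'(0)>0$ yields the linear lower bound $F(t) \ge c\e t$. The more delicate \textbf{second comparison} addresses the negative mass term; the idea is to build a positive function $\Psi(t)$ that is a super-solution for the linear damped-negative-mass operator on the left, calibrated by \eqref{eq:cond_thm4}. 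In the balanced case $\alpha=\beta$ the condition $\mu_2\ge\beta\mu_1/2$ is tailored to admit a power ansatz $\Psi(t)=(1+t)^\sigma$ with an explicit $\sigma=\sigma(\mu_1,\mu_2,\beta)$; for $\alpha<\beta$ the mismatched time scales of damping and mass force the more elaborate algebraic condition. Replacing $F$ by $F - c\e\Psi$ then extracts the clean self-improving inequality $(MF')'(t)\ge C(1+t)^{-2}F(t)^2$ for $t$ large.

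The final step is the iteration at the critical exponents $p=n=2$. A naive bootstrap from $F(t)\ge c\e t$ through $(MF')'\ge C(1+t)^{-2}F^2$ already gives $T\le C\e^{-1}$, matching \eqref{eq:1}. To obtain the finer logarithmic improvement $T\le Ca(\e)$, one would exploit $f\equiv 0$ together with the 2D representation formula to upgrade the base bound to one carrying an extra logarithm, then iterate double integrations while tracking polynomial and logarithmic contributions simultaneously. The exponents $a_j,b_j$ in successive lower bounds $F(t)\ge C_j\e^{a_j}t(\log(1+t))^{b_j}$ satisfy coupled recurrences whose telescoped divergence threshold is exactly $\e^2 a^2\log(1+a)\gtrsim 1$, which forces \eqref{1c} via \eqref{def:a}.

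The hardest part will be the second comparison: verifying that \eqref{eq:cond_thm4} produces a $\Psi$ simultaneously large enough to dominate the negative mass contribution and small enough that $F - c\e\Psi$ retains a useful positive lower bound compatible with the $(1+t)^{-2}$ weight of the self-improving inequality. A secondary difficulty is the logarithmic bookkeeping in the critical iteration, where polynomial and logarithmic gains enter at the same order and the $\log(1+a)$ factor in \eqref{def:a} survives only if the log exponents are tracked at every iteration step rather than being absorbed into constants.
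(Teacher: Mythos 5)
Your overall scaffolding (functional ODE, multiplier for the damping, positivity, frame inequality, critical iteration) is reasonable, but the argument has a genuine gap at exactly the point where the logarithm in \eqref{def:a} must come from, and your account of the role of hypothesis \eqref{eq:cond_thm4} does not match what that hypothesis actually does. The paper does not run Theorem \ref{thm4} on $F(t)=\int u\,dx$ with the full multiplier $M(t)=\exp\bigl(\int_0^t\mu_1(1+s)^{-\beta}ds\bigr)$; it uses the \emph{half-exponent} multiplier $\l(t)=\exp\bigl(\tfrac{\mu_1}{2}\tfrac{(1+t)^{1-\beta}}{1-\beta}\bigr)$ at the level of the PDE, setting $w=\l u$, which eliminates the first-order term entirely and yields $w_{tt}-\Delta w=Qw+\l^{-1}|w|^2$ with
\begin{equation*}
Q(t)=\frac{\mu_1^2}{4(1+t)^{2\beta}}-\frac{\beta\mu_1}{2(1+t)^{\beta+1}}+\frac{\mu_2}{(1+t)^{\alpha+1}}.
\end{equation*}
Hypothesis \eqref{eq:cond_thm4} is precisely the condition $Q\ge0$ (for $\alpha<\beta$ it is obtained by minimizing $(1+t)^{\beta+1}Q(t)$ over $t$); it is not the calibration of a power-type supersolution $(1+t)^\sigma$ for the damped ODE, and your ansatz would not reproduce that specific algebraic expression. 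Moreover, at the purely functional level the negative mass term is actually \emph{favorable}: once $F\ge0$ it contributes $+\mu_2(1+t)^{-\alpha-1}F\ge0$ to $F''$, so your ``second comparison'' is solving a non-problem; the real obstruction sits at the pointwise level.

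Concretely, the $\log$ in \eqref{def:a} cannot be extracted from $F(t)\ge c\e t$ plus $(MF')'\ge C(1+t)^{-2}F^2$ --- that route only gives $T\le C\e^{-1}$, as you yourself note. The logarithm comes from a \emph{pointwise} lower bound $w(x,t)\ge C\e\bigl[(t+R)(t-|x|+R)\bigr]^{-1/2}$ on $R\le|x|\le t-R$, obtained from the positive-kernel 2D Duhamel representation of the \emph{undamped} equation for $w$ (using $f\equiv0$, $g\ge0$); squaring and integrating in $x$ gives $W''(t)\ge C\e^2\log t$, after which Takamura's improved Kato lemma yields \eqref{1c}. That representation formula is not available for the original equation because of the $u_t$ term, and discarding the Duhamel integral requires $Qw+\l^{-1}|w|^2\ge0$, i.e.\ both $Q\ge0$ (this is where \eqref{eq:cond_thm4} is genuinely used) and the pointwise positivity $w\ge0$, which the paper establishes by a comparison with an auxiliary problem plus uniqueness. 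Your sketch gestures at ``the 2D representation formula'' in one clause but never performs the transformation that makes it applicable, never verifies $Q\ge0$, and never establishes pointwise positivity of $u$; without these, the base bound carrying the logarithm --- and hence \eqref{def:a} --- is unobtainable.
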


\begin{remark}
	In Theorem \ref{thm1}, we require that at least one of the initial data does not vanish identically, which is weaker than that in the corresponding result (Theorem 2.1) in \cite{LT}.
\end{remark}

\begin{remark}
	Observe that:
	\begin{itemize}
		\item \eqref{1a} is stronger than \eqref{eq:1} by the fact that
		$1<p<2$ is equivalent to
		\[
		\frac{p-1}{3-p}<\frac{2p(p-1)}{\gamma(p,2)};
		\]
		
		\item \eqref{1b} is stronger than \eqref{eq:1} by the fact that
		$p>1$ is equivalent to
		\[
		\frac{p-1}{2}<\frac{2p(p-1)}{\gamma(p,1)};
		\]
		
		\item \eqref{1c} is stronger than \eqref{eq:1} by the fact that when $n=p=2$
		\[
		a(\e) < \e^{-1} =\e^{-2\cdot2(2-1)/\gamma(2,2)}
		\]
		for sufficiently small $\e$.
	\end{itemize}
\end{remark}

\section{Lower bound for derivative of the functional}

Following the idea in \cite{LT}, we introduce the multiplier
\begin{equation}
\label{eq:m}
m(t):= \exp\left(\mu_1 \frac{(1+t)^{1-\beta}}{1-\beta}\right).
\end{equation}
Clearly
\begin{equation}
\label{eq:m_bound}
1 \geq m(t) \geq m(0) >0 \quad\text{for $t \geq 0$}.
\end{equation}
Moreover, let us define the functional
\begin{equation*}
F_0(t) := \int_{\R^n} u(x,t)dx,
\end{equation*}
then
\[
F_0(0) = \e\int_{\R^n} f(x)dx, \quad F'_0(0) = \e\int_{\R^n}g(x)dx
\]
are non-negative due to the hypothesis of positiveness on the initial data.
Our final target is to establish a lower bound for $F_0(t)$.

Let us start finding the lower bound of the derivative of the functional, i.e., $F'_0(t)$. 
Due to \eqref{support}, choosing the test function $\phi=\phi(x,s)$ in \eqref{eq:energy_solution} to satisfy
$\phi\equiv 1$ in $\{(x,s)\in \R^n\times[0,t]:|x|\le s+R\}$, we get
\[
\begin{split}
&\int_{\R^n}u_t(x,t)dx-\int_{\R^n}u_t(x,0)dx+\int_0^tds\int_{\R^n}\frac{\mu_1}{(1+s)^\beta}u_t(x,s)dx \\
=& \int_0^t \int_{\R^n} \frac{\mu_2}{(1+s)^{\alpha+1}} u(x,s) dx
+\int_0^tds\int_{\R^n}|u(x,s)|^pdx,
\end{split}
\]
which yields by taking derivative with respect to $t$
\begin{equation}
\label{2}
F_0''(t)+\frac{\mu_1}{(1+t)^\beta}F_0'(t)
= \frac{\mu_2}{(1+t)^{\alpha+1}}F_0(t) + \int_{\R^n}|u(x,t)|^pdx.
\end{equation}
Here we note that \eqref{2} can be established by regularity assumption on the solution.
Multiplying both sides of \eqref{2} with $m(t)$ yields
\begin{equation}
\label{3}
\left\{m(t)F'_0(t)\right\}'
= m(t)\frac{\mu_2}{(1+t)^{\alpha+1}} \, F_0(t) + m(t)\int_{\R^n}|u(x,t)|^pdx.
\end{equation}
Integrating the above equality over $[0, t]$ we get
\begin{equation}
\label{4}
\begin{split}
F'_0(t) =&\ \frac{m(0)}{m(t)}F'_0(0) + \frac{1}{m(t)} \int_0^t m(s)\frac{\mu_2}{(1+s)^{\alpha+1}}F_0(s) ds\\
&+ \frac{1}{m(t)} \int_0^t m(s)ds\int_{\R^n}|u(x,s)|^pdx.
\end{split}
\end{equation}

To get the lower bound for $F'_0$, we need the positiveness of $F_0$, and this can be obtained by a comparison argument. However, since we assume that at least one of the initial data does not vanish identically, we have to consider the following two cases.

\textbf{Case 1: $f\geq 0 (\not\equiv 0)$, $g \geq 0$.} This means that $F_0(0)>0$, $F'_0(0) \geq 0$. By the continuity of $F_0$, it is positive at least for small time. Suppose that $t_0$ is the smallest zero point of $F_0$, such that $F_0>0$ in $[0,t_0)$. Then, integrating \eqref{4} over this interval we have
\begin{equation*}
\begin{split}
0 = F_0(t_0) =&\ F_0(0)  + m(0)F'_0(0) \int_0^{t_0} \frac{ds}{m(s)} \\
&+ \int_{0}^{t_0} \frac{ds}{m(s)} \int_0^s m(r)\frac{\mu_2}{(1+r)^{\alpha+1}}F_0(r) dr\\
&+ \int_{0}^{t_0}\frac{ds}{m(s)} \int_0^s m(r)dr\int_{\R^n}|u(x,r)|^pdx >0,
\end{split}
\end{equation*}
which leads to a contradiction, and hence $F(t)$ is positive all the time.

\textbf{Case 2: $f\geq 0$, $g \geq 0 (\not\equiv 0)$.} This imply that $F_0(0) \geq 0$, $F'_0(0)>0$. We apply the same argument as in the first case to $F'_0$. Suppose that $t_0$ is the smallest zero point of $F'_0$, such that $F'_0$ is positive on the interval $[0,t_0)$. Therefore $F_0$ is
strictly monotone increasing on the same interval, and hence positive due to $F_0(0) \geq 0$. Letting $t=t_0$ in \eqref{4}, we again come to a contradiction. Therefore $F'_0$ is always strictly positive, and hence $F_0(t)>0$ holds for all $t>0$.

Coming back to \eqref{4}, using the positivity of $F_0$, the boundedness of $m(t)$ and that $F'_0(0) \geq 0$, we obtain the lower bound for $F'_0$ as
\begin{equation}
\label{eq:bound_F'_0}
F'_0(t) \geq m(0) \int_0^t \int_{\R^n} |u(x,s)|^p dxds \quad\text{for $t \geq 0$}.
\end{equation}

\section{Lower bound for the weighted functional}
Set
\begin{equation*}
F_1(t):=\int_{\R^n}u(x,t)\psi_1(x,t)dx,
\end{equation*}
where $\psi_1$ is the test function introduced by Yordanov and Zhang \cite{YZ06}
\begin{equation*}
\psi_1(x,t):=e^{-t}\phi_1(x),
\quad
\phi_1(x):=
\begin{cases}
\d\int_{S^{n-1}}e^{x\cdot\omega}dS_\omega & \text{for $n\ge2$},\\
e^x+e^{-x} & \text{for $n=1$}.
\end{cases}
\end{equation*}

\begin{lemma}[Inequality (2.5) of Yordanov and Zhang \cite{YZ06}]\label{lem1}
	\begin{equation}
	\label{14}
	\int_{|x|\leq t+R}\left[\psi_1(x,t)\right]^{p/(p-1)}dx
	\leq C(1+t)^{(n-1)\{1-p/(2(p-1))\}},
	\end{equation}
	where $C_1=C_1(n,p,R)>0$.
\end{lemma}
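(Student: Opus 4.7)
The plan is a standard asymptotic analysis: the exponential decay $e^{-t}$ built into $\psi_1$ will exactly cancel the exponential growth of the $L^{p/(p-1)}$-mass of $\phi_1$ over the ball $|x|\le t+R$, leaving only the algebraic factor in $1+t$ predicted by the dimension.

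The first step is the pointwise bound
\begin{equation*}
\phi_1(x) \le C(1+|x|)^{-(n-1)/2}e^{|x|}, \qquad x\in\R^n.
\end{equation*}
For $n=1$ this is immediate from $e^x+e^{-x}\le 2e^{|x|}$. For $n\ge 2$, rotational symmetry ($\phi_1$ is obviously radial) lets me reduce to
\begin{equation*}
\phi_1(x) = |S^{n-2}|\int_{-1}^{1} e^{|x|s}(1-s^2)^{(n-3)/2}\,ds,
\end{equation*}
and Laplace's method near the endpoint $s=1$ yields the sharp asymptotic $\phi_1(x)\sim c_n|x|^{-(n-1)/2}e^{|x|}$ as $|x|\to\infty$; continuity then handles bounded sets.

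Next, setting $q:=p/(p-1)$ and passing to polar coordinates bounds the integral in question by
\begin{equation*}
C e^{-qt}\int_0^{t+R} r^{n-1}(1+r)^{-(n-1)q/2}e^{qr}\,dr.
\end{equation*}
The integrand $g(r):=r^{n-1}(1+r)^{-(n-1)q/2}e^{qr}$ satisfies $g'(r)/g(r)\ge q/2$ once $r$ exceeds a fixed constant $r_0$, so $g(r)\le g(t+R)e^{-(q/2)(t+R-r)}$ on $[r_0,t+R]$. Integrating this tail bound (and noting $\int_0^{r_0}g$ is uniformly bounded) yields
\begin{equation*}
\int_0^{t+R}g(r)\,dr \le C(1+t)^{n-1-(n-1)q/2}e^{q(t+R)}.
\end{equation*}
The factor $e^{-qt}$ cancels $e^{qt}$, the residual $e^{qR}$ is absorbed into the constant, and the remaining exponent equals $(n-1)\{1-p/(2(p-1))\}$, giving the claim.

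The only non-trivial ingredient is the Laplace-method asymptotic for $\phi_1$ when $n\ge 2$; everything else is bookkeeping of exponents. One subtle point is that the exponent $(n-1)\{1-p/(2(p-1))\}$ can be of either sign depending on $(n,p)$, so the tail argument via $g'/g \ge q/2$ (rather than any naive monotonicity of the algebraic prefactor $r^{n-1}(1+r)^{-(n-1)q/2}$) is what makes the endpoint estimate go through uniformly in both regimes.
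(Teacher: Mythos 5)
Your argument is correct: the pointwise bound $\phi_1(x)\le C(1+|x|)^{-(n-1)/2}e^{|x|}$ obtained by Laplace's method at the endpoint $s=1$, followed by the polar-coordinate integration with the logarithmic-derivative tail estimate, is exactly the standard proof of this inequality, and your handling of the sign-indefinite exponent via $g'/g\ge q/2$ is sound. The paper itself gives no proof but simply cites inequality (2.5) of Yordanov and Zhang \cite{YZ06}, whose argument is essentially the one you reconstructed, so there is nothing to compare beyond noting that your write-up is a faithful self-contained version of the cited result.
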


Next we aim to establish the lower bound for $F_1$. From the definition of energy solution \eqref{eq:energy_solution}, we have that
\[
\begin{split}
&\frac{d}{dt}\int_{\R^n}u_t(x,t)\phi(x,t)dx +\int_{\R^n}\left\{-u_t(x,t)\phi_t(x,t)-u(x,t)\Delta\phi(x,t)\right\}dx\\
&+\int_{\R^n}\frac{\mu_1 }{(1+t)^\beta}u_t(x,t)\phi(x,t)dx
- \int_{\R^n} \frac{\mu_2}{(1+t)^{\alpha+1}}u(x,t)\phi(x,t)dx\\
=&\ \int_{\R^n}|u(x,t)|^p\phi(x,t)dx.
\end{split}
\]
Multiplying both sides of the above equality with $m(t)$ yields
\[
\begin{split}
&\frac{d}{dt}\left\{m(t)
\int_{\R^n}u_t(x,t)\phi(x,t)dx\right\}\\
&+m(t)\int_{\R^n}\left\{-u_t(x,t)\phi_t(x,t)-u(x,t)\Delta\phi(x,t)\right\}dx\\
=&\ m(t)\int_{\R^n} \frac{\mu_2}{(1+t)^{\alpha+1}}u(x,t)\phi(x,t)dx + m(t)\int_{\R^n}|u(x,t)|^p\phi(x,t)dx,
\end{split}
\]
integrating which over $[0,t]$ yields
\[
\begin{split}
&m(t)
\int_{\R^n}u_t(x,t)\phi(x,t)dx
-m(0)\e\int_{\R^n}g(x)\phi(x,0)dx\\
&-\int_0^tds\int_{\R^n}m(s)u_t(x,s)\phi_t(x,s)dx - \int_0^tds\int_{\R^n} m(s)
u(x,s)\Delta\phi(x,s) \\
=&\ \int_0^tds\int_{\R^n}m(s) \frac{\mu_2}{(1+s)^{\alpha+1}}u(x,s)\phi(x,s)dx\\
&+ \int_0^tds\int_{\R^n} m(s)|u(x,s)|^p\phi(x,s)dx.
\end{split}
\]
Integrating by parts the first term in the second line of the above equality, we have
\begin{equation}
\label{eqforF_1}
\begin{split}
&m(t)
\int_{\R^n}u_t(x,t)\phi(x,t)dx
-m(0)\e\int_{\R^n}g(x)\phi(x,0)dx\\
&-m(t)\int_{\R^n} u(x,t)\phi_t(x,t)dx + m(0)\e \int_{\R^n} f(x)\phi_t(x,0)dx\\
&+ \int_0^t ds \int_{\R^n} m(s)\frac{\mu_1}{(1+s)^\beta}u(x,s)\phi_t(x,s)dx
\\
& + \int_0^t ds \int_{\R^n} m(s)u(x,s)\phi_{tt}(x,s)dx - \int_0^tds\int_{\R^n} m(s)
u(x,s)\Delta\phi(x,s) \\
=&\ \int_0^tds\int_{\R^n}m(s) \frac{\mu_2}{(1+s)^{\alpha+1}}u(x,s)\phi(x,s)dx\\
&+ \int_0^tds\int_{\R^n} m(s)|u(x,s)|^p\phi(x,s)dx.
\end{split}
\end{equation}

Setting
\[
\phi(x,t)=\psi_1(x,t)=e^{-t}\phi_1(x)
\quad\text{on $\supp u$},
\]
then we have
\[
\phi_t=-\phi,\quad \phi_{tt}=\Delta\phi \quad\text{on $\supp u$}.
\]
Hence we obtain from \eqref{eqforF_1}
\[
\begin{split}
m(t)\{F_1'(t)+2F_1(t)\}
=& \ m(0)\e\int_{\R^n}\left\{ f(x)+g(x)\right\}\phi_1(x)dx \\
&+ \int_0^t m(s) \left\{\frac{\mu_1}{(1+s)^\beta} + \frac{\mu_2}{(1+s)^{\alpha+1}} \right\} F_1(s)ds\\
&+\int_0^tds\int_{\R^n}m(s)|u(x,s)|^pdx,
\end{split}
\]
which implies
\begin{equation}
\label{eq:F'_1+2F_1}
\begin{split}
F'_1(t)+2F_1(t) &\ge\frac{m(0)}{m(t)}
C_{f,g}\e+\frac{1}{m(t)}\int_0^tm(s)
\left\{\frac{\mu_1}{(1+s)^\beta} + \frac{\mu_2}{(1+s)^{\alpha+1}} \right\}F_1(s)ds\\
&\ge m(0)C_{f,g}\e+\int_0^tm(s)
\left\{\frac{\mu_1}{(1+s)^\beta} + \frac{\mu_2}{(1+s)^{\alpha+1}} \right\}F_1(s)ds,
\end{split}
\end{equation}
where
\[
C_{f,g}:=\int_{\R^n}\left\{f(x)+g(x)\right\}\phi_1(x)dx >0.
\]
Integrating the above inequality over $[0,t]$ after a multiplication with $e^{2t}$, we get
\begin{equation}
\label{6}
\begin{split}
e^{2t}F_1(t)
\ge & \ F_1(0)+m(0)C_{f,g}\e \int_0^t e^{2s}ds \\
&+\int_0^t e^{2s}ds
\int_0^sm(r) \left\{\frac{\mu_1}{(1+r)^\beta} + \frac{\mu_2}{(1+r)^{\alpha+1}} \right\} F_1(r)dr.
\end{split}
\end{equation}
Applying a comparison argument, we have that $F_1(t)>0$ for $t>0$. Again, we should consider two cases due to the hypothesis on the data.

\textbf{Case 1: $f\geq0 (\not\equiv0)$, $g\geq0$.} In this case $F_1(0)=C_{f,0}\e>0$. The continuity of $F_1$ yields that $F_1(t)>0$ for small $t>0$. If there is the nearest zero point $t_0$ to $t=0$ of $F_1$, then \eqref{6} gives a contradiction at $t_0$.

\textbf{Case 2: $f\geq0$, $g\geq0(\not\equiv0)$.} If $f\not\equiv0$, we are in the previous case. If $f\equiv0$, then $F_1(0)=0$, $F'_1(0)= C_{0,g}\e >0$. By the continuity of $F'_1$, we have that $F'_1$ is strictly positive for small $t$, hence there exists some $t_1>0$ such that $F'_1 >0$ over $[0,t_1]$. Then $F_1$ is strictly monotone increasing on this interval, and then strictly positive on $(0,t_1]$. Now, suppose by contradiction that $t_2(>t_1)$ is the smallest zero point of $F_1$, and so $F_1>0$ on $(0, t_2)$. Then we claim that $F'_1(t_2) \leq 0$. If not, by continuity, $F'_1$ is strictly positive in a small interval $(t_3,t_2]$ for some time $t_3$ satisfying $0<t_3<t_2$. This implies that $F_1$ is strictly monotone increasing on $(t_3,t_2]$ and then negative due to the fact that $F_1(t_2)=0$, a contradiction. We then verify the claim $(F'_1(t_2) \leq 0)$. Letting $t=t_2$ in the inequality \eqref{eq:F'_1+2F_1}, noting the fact that $F_1(t_2)=0$, $F'_1(t_2) \leq 0$ and $F_1 \geq 0$ on $[0,t_2]$, we come to a contradiction. And we show that $F_1 >0$ for $t>0$ also in this case.

Therefore, coming back to \eqref{6}, we may ignore the last term, and then we have
\[
e^{2t}F_1(t) \ge F_1(0)+m(0)C_{f,g}\e\int_0^te^{2s}ds \ge \frac{1}{2}m(0)C_{f,g}\e (e^{2t}-1),
\]
from which, finally, we get the lower bound of $F_1(t)$ in the form
\begin{equation}
\label{eq:bound_F_1}
F_1(t)>\frac{1-e^{-2}}{2}m(0)C_{f,g}\e
\quad\text{for $t\ge1$}.
\end{equation}

\begin{remark}
	Note that we have to cut off the time because $f$ can vanish and so $F_1(0)$ can be equal to 0, due to our assumption on the data. If $f$ is not identically equal to zero, then the lower bound of $F_1$, i.e. \eqref{eq:bound_F_1}, holds for all $t\geq 0$.
\end{remark}
\section{Lower bound for the functional}

By H\"older inequality and using the compact support of the solution \eqref{support}, we have
\begin{equation}
\label{7}
\int_{\R^n}|u(x,t)|^pdx\ge C_2(1+t)^{-n(p-1)}|F_0(t)|^p
\quad\text{for $t\ge0$},
\end{equation}
where $C_2=C_2(n,p,R)>0$.
Plugging this inequality into \eqref{eq:bound_F'_0} and then integrating it over $[0,t]$, we have
\begin{equation}
\label{18}
F_0(t) \geq C_3\int_0^tds
\int_0^s(1+r)^{-n(p-1)}F_0(r)^pdr
\quad\text{for $t\ge0$},
\end{equation}
where $C_3:=C_2m(0)>0.$

Moreover, by H\"older inequality, Lemma \ref{lem1} and estimate \eqref{eq:bound_F_1}, we get
\begin{equation*}
\begin{split}
\int_{\R^n} |u(x,t)|^p dx
&\geq \left( \int_{\R^n} |\psi_1(x,t)|^{p/(p-1)}\right)^{1-p} |F_1(t)|^p\\
&\geq C_1^{1-p} \left(\frac{1-e^{-2}}{2}m(0)C_{f,g}\right)^p \e^p (1+t)^{(n-1)(1-p/2)} \quad\text{for $t\ge1$}.
\end{split}
\end{equation*}
Plugging this inequality into \eqref{eq:bound_F'_0} we have
\begin{equation}
\label{19}
F'_0(t)\ge C_4\e^p\int_{1}^t(1+s)^{(n-1)(1-p/2)}ds
\quad\text{for $t\ge1$},
\end{equation}
where
\[
C_4:=m(0)C_1^{1-p} \left(\frac{1-e^{-2}}{2}m(0)C_{f,g}\right)^p >0.
\]
Integrating \eqref{19} over $[1, t]$, we obtain
\begin{equation}
\label{20}
\begin{split}
F_0(t)
& \ge C_4\e^p\int_{1}^tds
\int_{1}^s(1+r)^{(n-1)(1-p/2)}dr\\
& \ge C_4\e^p(1+t)^{-(n-1)p/2}\int_{1}^tds
\int_{1}^s (r-1)^{n-1}dr\\
& =\frac{C_4}{n(n+1)}\e^p(1+t)^{-(n-1)p/2} (t-1)^{n+1}
\quad\text{for $t\ge1$}.
\end{split}
\end{equation}

\section{Iteration argument}
Now we come to the iteration argument to get the upper bound of the lifespan estimates. First we make the ansatz that $F_0(t)$ satisfies
\begin{equation}
\label{21}
F_0(t) \ge D_j(1+t)^{-a_j}(t-1)^{b_j} \quad\text{for $t\ge 1,\quad j=1,2,3,\dots$}
\end{equation}
with positive constants $D_j,a_j,b_j$,
which will be determined later.
Due to \eqref{20}, note that \eqref{21} is true when $j=1$ with
\begin{equation}
\label{22}
D_1=\frac{C_4}{n(n+1)}\e^p,
\quad a_1=(n-1)\frac{p}{2},
\quad b_1=n+1.
\end{equation}
Plugging \eqref{21} into \eqref{18}, we have
\begin{equation*}
\begin{split}
	F_0(t)
	& \ge C_3D_j^p\int_{1}^tds
	\int_{1}^s(1+r)^{-n(p-1)-pa_j}(r-1)^{pb_j}dr\\
	& \ge C_3D_j^p(1+t)^{-n(p-1)-pa_j}\int_{1}^tds
	\int_{1}^s(r-1)^{pb_j}dr\\
	& \ge \frac{C_3D_j^p}{(pb_j+2)^2}(1+t)^{-n(p-1)-pa_j}(t-1)^{pb_j+2} \quad\text{for $t\ge1$}.
\end{split}
\end{equation*}
So we can define the sequences $\{D_j\}_{j\in\N}$, $\{a_j\}_{j\in\N}$, $\{b_j\}_{j\in\N}$ by
\begin{equation}
\label{23}
D_{j+1}\ge\frac{C_3D_j^p}{(pb_j+2)^2},
\quad
a_{j+1}=pa_j+n(p-1),
\quad
b_{j+1}=pb_j+2
\end{equation}
to establish
\[
F_0(t)\ge D_{j+1}(1+t)^{-a_{j+1}}(t-1)^{b_{j+1}}\quad\text{for $t\ge1$}.
\]
It follows from \eqref{22} and \eqref{23} that for $j=1,2,3,\dots$
\begin{equation*}
a_j=p^{j-1}\left((n-1)\frac{p}{2}+n\right)-n,
\quad
b_j=p^{j-1}\left(n+1+\frac{2}{p-1}\right)-\frac{2}{p-1}.
\end{equation*}
Employing the inequality
\[
b_{j+1}=pb_j+2\le p^j\left(n+1+\frac{2}{p-1}\right)
\]
in \eqref{23}, we have
\begin{equation}
\label{24a}
D_{j+1}\ge C_5\frac{D_j^p}{p^{2j}},
\end{equation}
where
\[
C_5:=\frac{C_3}{\left(n+1+\d\frac{2}{p-1}\right)^2} >0.
\]
From \eqref{24a} it holds that
\[
\begin{split}
\log D_j&\geq p\log D_{j-1}-2(j-1)\log p+\log C_5\\
&\geq p^2\log D_{j-2}-2\big(p(j-2)+(j-1)\big)\log p+(p+1)\log C_5\\
&\geq \cdots\\
&\geq p^{j-1} \log D_1 - \sum_{k=1}^{j-1} 2 p^{k-1}(j-k) \log p + \sum_{k=1}^{j-1} p^{k-1} \log C_5\\
&= p^{j-1} \left(\log D_1-\sum_{k=1}^{j-1}\frac{2k\log p-\log C_5}{p^k}\right),
\end{split}
\]
which yields that
\[
D_j\ge\exp\left\{p^{j-1}\left(\log D_1-S_p(j)\right)\right\},
\]
where
\[
S_p(j):=\sum_{k=1}^{j-1}\frac{2k\log p-\log C_5}{p^k}.
\]
We know that $\sum_{k=0}^{\infty} x^k = 1/(1-x)$ and $\sum_{k=1}^{\infty} kx^k= x/(1-x)^2$ when $|x|<1$. Then
\[
S_p(\infty) := \lim_{j \to \infty} S_p(j) = \log\{ C_5^{p/(1-p)} p^{2p/(1-p)^2}\}.
\]
Moreover $S_p(j)$ is a sequence definitively increasing with $j$. Hence we obtain that
\[
D_j\ge\exp\left\{p^{j-1}\left(\log D_1-S_p(\infty)\right)\right\},~~~j\ge 2.
\]
Turning back to \eqref{21}, we have
\begin{equation}\label{27}
\begin{aligned}
F_0(t)\ge(1+t)^n(t-1)^{-2/(p-1)}\exp\left(p^{j-1}J(t)\right)
\quad\mbox{for}\ t\ge 1,
\end{aligned}
\end{equation}
where
\[
\begin{split}
J(t)=& -\bigg((n-1)\frac{p}{2}+n\bigg)\log(1+t)
+\bigg(n+1+\frac{2}{p-1}\bigg)\log(t-1)\\
&+\log D_1-S_p(\infty).
\end{split}
\]
For $t\geq 2$, by the definition of $J(t)$, we have
\[
\begin{split}
J(t)\geq& -\bigg((n-1)\frac{p}{2}+n\bigg)\log(2t)+\bigg(n+1+\frac{2}{p-1}\bigg)\log \bigg(\frac{t}{2}\bigg)\\
&+\log D_1-S_p(\infty)\\
=&\ \frac{\gamma(p, n)}{2(p-1)}\log t+\log D_1-\bigg((n-1)\frac{p}{2}+2n+1+\frac{2}{p-1}\bigg)\log 2-S_p(\infty)\\
=&\ \log \big(t^{\gamma(p, n)/\{2(p-1)\}}D_1\big)-C_6,\\
\end{split}
\]
where
\[
C_6:=\bigg((n-1)\frac{p}{2}+2n+1+\frac{2}{p-1}\bigg)\log 2+S_p(\infty).
\]
\par
Thus, if
\[
\begin{aligned}
t>C_7\varepsilon^{-2p(p-1)/\gamma(p, n)}
\end{aligned}
\]
with
\[
C_7:=\Big(\frac{n(n+1)e^{C_6+1}}{C_4}\Big)^{2(p-1)/\gamma(p, n)}>0,
\]
we then get $J(t)>1$,
and this in turn gives that $F_0(t)\rightarrow \infty$ by letting $j\rightarrow \infty$ in \eqref{27}. Since we assume that $t\geq2$ in the above iteration argument, we require
\[
0<\e\leq \e_0 := \left(\frac{C_7}{2} \right)^{\frac{\gamma(p,n)}{2p(p-1)}}.
\]
Therefore we get the desired upper bound,
\[
\begin{aligned}
T\leq C_7\varepsilon^{-2p(p-1)/\gamma(p, n)}
\end{aligned}
\]
for $0<\e\leq \e_0$, and hence we finish the proof of Theorem \ref{thm1}.

\section{Proof for Theorem \ref{thm2} and Theorem \ref{thm3}}

To prove the theorems in low dimensions, we proceed similarly as for Theorem \ref{thm1}, but we change the first step of the iteration argument to get the desired improvement.
\par
From \eqref{4}, using \eqref{eq:m_bound} and noting that $F_0$ is positive, we have
\[
\begin{aligned}
F_0'(t)\geq \frac{m(0)}{m(t)}F_0'(0)\geq C_8\e,
\end{aligned}
\]
where
\[
C_8:=m(0)\int_{\R^n}g(x)dx > 0
\]
due to the assumption on $g$. The above inequality implies that
\begin{equation}
\label{81}
F_0(t)\geq C_8\e t \quad\text{for $t\geq 0$}.
\end{equation}
\par
By \eqref{7} and \eqref{81}, we have
\begin{equation}
\label{81a}
\int_{\R^2}|u(x, t)|^pdx\geq C_{9}\e^p(1+t)^{-n(p-1)}t^p,
\end{equation}
with $C_{9}:=C_2C_8^p>0$.
Plugging \eqref{81a} into \eqref{eq:bound_F'_0} and integrating it over $[0, t]$ we come to
\begin{equation}
\label{83}
\begin{split}
F_0(t)&\geq m(0)C_{9}\e^p\int_0^tds\int_0^s(1+r)^{-n(p-1)}r^p dr\\
&\geq m(0)C_{9}\e^p(1+t)^{-n(p-1)}\int_0^tds\int_0^s r^pdr\\
&=C_{10}\e^p(1+t)^{-n(p-1)}t^{p+2}
\quad\text{for $t\ge0$}
\end{split}
\end{equation}
with
\[
C_{10}:=\frac{m(0)C_{9}}{(p+1)(p+2)}>0.
\]

\begin{remark}
	Note that the inequality \eqref{83} improves the lower bound of \eqref{20} for $n=2$ and $1<p<2$, and for $n=1$ and $p>1$. Hence we may establish the improved lifespan estimate as stated in Theorem \ref{thm2} and Theorem \ref{thm3}.
\end{remark}

In a similar way as in the last section, we define our iteration sequences,
$\{\widetilde{D}_j\}, \{\widetilde{a}_j\}, \{\widetilde{b}_j\}$, such that
\begin{equation}
\label{85}
\begin{aligned}
F_0(t)\geq \widetilde{D}_j(1+t)^{-\widetilde{a}_j}t^{\widetilde{b}_j}
\quad\text{for $t\geq 0$ and $j=1, 2, 3, \dots$}
\end{aligned}
\end{equation}
with positive constants, $\widetilde{D}_j, \widetilde{a}_j, \widetilde{b}_j$, and
\begin{equation}\nonumber
\begin{aligned}
\widetilde{D}_1=C_{10}\e^p, \quad \widetilde{a}_1=n(p-1), \quad \widetilde{b}_1=p+2.
\end{aligned}
\end{equation}
Combining \eqref{18} and \eqref{85}, we have
\[
\begin{split}
F_0(t)&\geq C_3\widetilde{D}_j^p\int_0^tds\int_0^s(1+r)^{-n(p-1)-p\widetilde{a}_j}r^{p\widetilde{b}_j}dr\\
&\geq \frac{C_3\widetilde{D}_j^p}{(p\widetilde{b}_j+2)^2}(1+t)^{-n(p-1)-p\widetilde{a}_j}t^{p\widetilde{b}_j+2}
\quad\text{for $t\geq 0$}.\\
\end{split}
\]
So the sequences satisfy
\begin{equation*}
\begin{aligned}
\widetilde{a}_{j+1}&=p\widetilde{a}_j+n(p-1),\\
\widetilde{b}_{j+1}&=p\widetilde{b}_j+2,\\
\widetilde{D}_{j+1}&\geq \frac{C_3\widetilde{D}_j^p}{(p\widetilde{b}_j+2)^2},
\end{aligned}
\end{equation*}
which means that
\begin{equation*}\nonumber
\begin{aligned}
\widetilde{a}_{j}&=np^j-n,\\
\widetilde{b}_{j}&=\frac{p+1}{p-1}p^{j}-\frac{2}{p-1},\\
\widetilde{D}_{j+1} &\ge C_{11} \frac{\widetilde{D}_j^p}{p^{2j}},\\
\end{aligned}
\end{equation*}
where $C_{11}:= C_3(p-1)^2/[p(p+1)]^2$, from which we get
\[
\log\widetilde{D}_{j}\geq p^{j-1}\left(\log\widetilde{D}_1-\sum_{k=1}^{j-1}\frac{2k\log p-\log C_{11}}{p^k}\right).
\]
Then proceeding as above we have
\[
\begin{aligned}
F_0(t)&\geq\widetilde{D}_j(1+t)^{n-np^j}
t^{p^j(p+1)/(p-1)-2/(p-1)}\\
&\geq (1+t)^n t^{-2/(p-1)}\exp\big(p^{j-1}\widetilde{J}(t)\big),
\end{aligned}
\]
where
\[
\widetilde{J}(t):=-np\log(1+t)+\left(p\,\frac{p+1}{p-1}\right)\log t+\log \widetilde{D}_1-\widetilde{S}_p(\infty)
\]
and
\[
\widetilde{S}_p(\infty) = \log\{ C_{11}^{p/(1-p)} p^{2p/(1-p)^2}\}.
\]
Estimating $\wt{J}(t)$ for $t\ge1$ we get
\[
\begin{split}
\wt{J}(t)
&\geq -np\log(2t)+\left(p\,\frac{p+1}{p-1}\right)\log t+\log \widetilde{D}_1-\widetilde{S}_p(\infty)\\
&=\frac{\gamma(p,n)-2}{p-1} \log t+\log \widetilde{D}_1-\widetilde{S}_p(\infty)-np\log 2,
\end{split}
\]
and then we obtain that
\[
\wt{J}(t)\ge\log\Big(t^{(\gamma(p,n)-2)/(p-1)}\widetilde{D}_1\Big)-C_{12}
\quad\text{for $t\geq 1$},
\]
where $C_{12}:=\widetilde{S}_p(\infty)+np\log 2$.
In particular,
\[
\gamma(p,n)-2=
\begin{cases}
p(3-p) &\text{if $n=2$}, \\
2p &\text{if $n=1$}.
\end{cases}
\]
By the definition of $\widetilde{D}_1$, proceeding in the same way as that in the previous section, we get the lifespan estimate in Theorem \ref{thm2} when $n=2$, and the lifespan estimate in Theorem \ref{thm3} when $n=1$.

\section{Proof for Theorem \ref{thm4}}

Let us come back to our initial equation \eqref{eq:main_problem}, with $n=p=2$. In this case we introduce another multiplier
\begin{equation}
\label{2problem_trans}
\l(t):=\exp\left(\frac{\mu_1}{2}\frac{(1+t)^{1-\beta}}{1-\beta}\right),
\end{equation}
which yields
\[
\l'(t)=\frac{\mu_1}{2(1+t)^{\beta}}\l(t)
\]
and
\[
\l''(t)=\left( \frac{\mu_1^2}{4(1+t)^{2\beta}} - \frac{\beta \mu_1}{2(1+t)^{\beta+1}}\right)\l(t).
\]
Introducing a new unknown function by
\[
w(x,t):=\l(t)u(x,t),
\]
then it is easy to get
\[
w_t=\frac{\mu_1}{2(1+t)^\beta}\l u+\l u_t
\]
and
\[
w_{tt}=\frac{\mu_1^2}{4(1+t)^{2\beta}} \l u
-\frac{\beta\mu_1}{2(1+t)^{\beta+1}} \l u
+\frac{\mu_1}{(1+t)^\beta} \l u_t+ \l u_{tt}.
\]
With this in hand the equation \eqref{eq:main_problem} can be transformed to
\begin{equation}
\label{eq:main_problem_n=p=2}
\left\{
\begin{aligned}
& w_{tt}-\Delta w = Q w + \l^{-1} |w|^2 \\
& w(x,0)=0, \quad w_t(x,0)= \l(0) \e g(x)
\end{aligned}
\right.
\end{equation}
where
\begin{equation*}
Q=Q(t):= \frac{\mu_1^2}{4(1+t)^{2\beta}} - \frac{\beta \mu_1}{2(1+t)^{\beta+1}} + \frac{\mu_2}{(1+t)^{\alpha+1}}.
\end{equation*}

A key property of the function $Q$ is its positivity. Indeed, we can write this function as $Q=\widetilde{Q}/(1+t)^{\beta+1}$, where
\begin{equation*}
\widetilde{Q}=\widetilde{Q}(t):= \frac{\mu_1^2}{4(1+t)^{\beta-1}} - \frac{\beta \mu_1}{2} + \frac{\mu_2}{(1+t)^{\alpha-\beta}},
\end{equation*}
and so it is enough to check the positivity of $\widetilde{Q}$. If $\alpha=\beta$, then $\widetilde{Q}$ is strictly decreasing to $\mu_2-\beta\mu_1/2$, that is positive by our assumption. If $\alpha<\beta$, than we can easily find the minimum $t_{0}$ of $\widetilde{Q}$, that is
\[
t_0 = -1+\left( \frac{\mu_1^2(\beta-1)}{4\mu_2(\beta-\alpha)} \right)^{\frac{1}{2\beta-\alpha-1}},
\]
and verify that the condition in \eqref{eq:cond_thm4} is equivalent to $\widetilde{Q}(t_0)\ge0$.

\begin{remark}
	Observe that:
	\begin{itemize}
		\item when $\alpha<\beta$, the condition \eqref{eq:cond_thm4} can be replaced by the more strong but easier condition
		\[
		\mu_2 \ge \frac{\mu_1^2}{4}\frac{\beta-1}{\beta-\alpha},
		\]
		that is equivalent to ask that $t_0 \le 0$, so that $\widetilde{Q}$ is increasing and positive for $t>0$;
		\item when $\alpha>\beta$, $\widetilde{Q}$ is strictly decreasing to $-\beta\mu_1/2 <0$, and then we have no chance to achieve the positivity of this function for all the time.
	\end{itemize}
\end{remark}

\begin{remark}
	We can rewrite the function $Q$ also as
	\begin{equation*}
	Q(t)=\frac{1}{4(1+t)^2}\left[\left(\frac{\mu_1}{(1+t)^{\beta-1}}-\beta\right)^2 + \frac{4\mu_2}{(1+t)^{\alpha-1}} - \beta^2\right],
	\end{equation*}
	which implies some connection with the definition \eqref{eq:def_delta} of $\delta$ in the scale invariant case ($\beta=1$) with positive mass and $\alpha=1$.
\end{remark}

Now, it is well-known that our integral equation is of the form
\begin{equation}
\label{eq:w_int}
\begin{split}
w(x,t) =&\ \frac{\l(0)\e}{2\pi} \int_{|x-y|\le t} \frac{g(y)}{\sqrt{t^2-|x-y|^2}}dy\\
&+\frac{1}{2\pi}\int_0^td\tau \int_{|x-y|\le t-\tau} \frac{Q(\tau)w(y,\tau)+\l^{-1}(\tau)|w(y,\tau)|^2}{\sqrt{(t-\tau)^2-|x-y|^2}}dy.
\end{split}
\end{equation}
Before we can move forward, we need the positivity of the solution.
\begin{lemma}\label{lem:w_pos}
	Under the assumption of Theorem \ref{thm4}, the solution $w$ of \eqref{eq:main_problem_n=p=2} is positive.
\end{lemma}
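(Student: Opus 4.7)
The plan is to realize $w$ as the pointwise limit of a monotone non-decreasing sequence of non-negative functions obtained by Picard iteration on the integral representation \eqref{eq:w_int}. Three facts make this work: the 2D Riemann kernel $1/\sqrt{(t-\tau)^2-|x-y|^2}$ is non-negative; the free term in \eqref{eq:w_int} is non-negative because $g\ge 0$ and $\l(0)\e>0$; and the potential $Q(t)$ was just shown to satisfy $Q(t)\ge 0$ for all $t\ge 0$ under the hypotheses \eqref{eq:cond_thm4} and $\alpha\le\beta$. The quadratic term $\l^{-1}(t)|w|^2$ is of course non-negative as well.

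Concretely, set $w^{(0)}\equiv 0$ and let $w^{(k+1)}$ be the right-hand side of \eqref{eq:w_int} with $w$ replaced by $w^{(k)}$. Since $g\ge 0$, the first iterate $w^{(1)}$ equals the free part and is non-negative. Assume inductively $w^{(k)}\ge w^{(k-1)}\ge 0$. Using the factorization
\[
(w^{(k)})^2-(w^{(k-1)})^2=(w^{(k)}-w^{(k-1)})(w^{(k)}+w^{(k-1)})
\]
together with $Q\ge 0$ and $\l^{-1}>0$, subtracting the two recurrences shows that $w^{(k+1)}-w^{(k)}$ equals a double integral of a non-negative quantity against the non-negative Riemann kernel, hence $w^{(k+1)}\ge w^{(k)}\ge 0$. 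Thus $\{w^{(k)}\}$ is a monotone non-decreasing sequence of non-negative functions.

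To pass to the limit, I would invoke the standard Picard framework for the 2D semilinear wave equation with quadratic nonlinearity and locally integrable Riemann kernel: on the maximal classical existence interval $[0,T)$ of \eqref{eq:main_problem_n=p=2} the iterates $w^{(k)}$ converge, uniformly on compact subsets, to the unique classical solution, which must therefore coincide with $w$. Passing to the limit in $w^{(k)}\ge 0$ yields $w\ge 0$, and strict positivity in the interior of the forward light cone issuing from $\{g>0\}$ follows because the first iterate $w^{(1)}$ (which is the free solution) is already strictly positive there.

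The main obstacle I anticipate is the justification of convergence of the iteration up to the full existence time $T$. Short-time convergence is immediate from a contraction argument in a weighted sup-norm on $\{|x|\le t+R,\ 0\le t\le t_0\}$ for small $t_0>0$; combining the monotone structure with uniqueness of the classical solution and a standard continuation argument then propagates the identification $w^{(k)}\to w$, and hence the sign information, to arbitrary $t<T$. No new estimate beyond those already available is required: the sign-preservation itself is a consequence of the non-negativity of the 2D Riemann kernel together with $Q\ge 0$, which is precisely the additional point that the conditions \eqref{eq:cond_thm4} and $\alpha\le\beta$ in Theorem \ref{thm4} are designed to secure.
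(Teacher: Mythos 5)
Your monotone Picard iteration is a reasonable strategy and the induction step itself is sound (the kernel is non-negative, $Q\ge0$ under \eqref{eq:cond_thm4}, and the factorization of $(w^{(k)})^2-(w^{(k-1)})^2$ does give $w^{(k+1)}\ge w^{(k)}\ge0$). The genuine gap is exactly where you flag it: the identification of the limit of the iterates with $w$ on the \emph{whole} interval $[0,T)$. A contraction argument only yields $w^{(k)}\to w$ on a short interval $[0,t_0]$, and your proposed continuation does not go through as stated. If you restart the Duhamel representation at $t_0$, the new free part is the homogeneous wave evolution of the data $\bigl(w(\cdot,t_0),w_t(\cdot,t_0)\bigr)$; in two space dimensions the contribution of the position data is a \emph{time derivative} of a kernel integral and is not sign-definite, and $w_t(\cdot,t_0)$ need not be non-negative either, so the positivity structure of the first iterate is lost after the first continuation step. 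Alternatively, if you keep the iteration based at $t=0$, the monotone limit $w^{\infty}=\lim_k w^{(k)}$ exists in $[0,+\infty]$ but you have no a priori upper bound showing it is finite and equal to $w$ beyond $[0,t_0]$; closing that would require an extra comparison/Gronwall argument that the proposal does not supply.

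The paper avoids all of this with a shorter device: it introduces the auxiliary Cauchy problem obtained by replacing $Qw$ with $Q|\wt w|$ (and keeping $\l^{-1}|\wt w|^2$), whose Duhamel integrand is manifestly non-negative \emph{regardless of the sign of} $\wt w$; hence $\wt w\ge0$ directly from the integral equation, with no iteration and no passage to the limit. Then $|\wt w|=\wt w$, so $\wt w$ solves \eqref{eq:main_problem_n=p=2}, and uniqueness of the (classical) solution forces $w\equiv\wt w\ge0$. I would either adopt that modified-equation argument, or, if you want to keep the iteration scheme, add the missing comparison estimate showing $w^{(k)}\le w$ (or at least that $w^{\infty}$ is locally bounded and solves the integral equation) up to any $t<T$.
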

\begin{proof}
	Let $\wt{w}=\wt{w}(x,t)$ be the classical solution of the Cauchy problem
	\begin{equation*}
	\left\{
	\begin{aligned}
	& \wt{w}_{tt}-\Delta \wt{w} = Q |\wt{w}| + \l^{-1} |\wt{w}|^2, \quad \mbox{in}\ \R^n\times[0,\infty),\\
	& \wt{w}(x,0)=0, \quad \wt{w}_t(x,0)= \l(0) \e g(x), \quad x \in \R^n.
	\end{aligned}
	\right.
	\end{equation*}
	It is clear from the analogous of \eqref{eq:w_int} for $\wt{w}$ that this function is positive, and then satisfies the system \eqref{eq:main_problem_n=p=2}. But $u$ is the unique solution of \eqref{eq:main_problem}, and so $w=\lambda u$ is the unique solution of \eqref{eq:main_problem_n=p=2}. Then $w \equiv \wt{w} \ge 0$.
\end{proof}

By Lemma \ref{lem:w_pos}, we can neglect the second term on the right-hand side of \eqref{eq:w_int}. Using the relation
$|y|\le R, |x|\le t+R$
due to the support property in the first term on the right-hand side, from which the inequalities
\begin{gather*}
t-|x-y| \le t-||x|-|y|| \le t-|x|+R \quad\mbox{for}\ |x|\ge R,\\
t+|x-y|\le t+|x|+R \le 2(t+R),
\end{gather*}
we obtain that
\begin{equation*}
w(x,t) \ge \frac{\l(0)\e}{2\sqrt{2}\pi\sqrt{t+R}\sqrt{t-|x|+R}}\int_{|x-y|\le t} g(y)dy \quad\text{for $|x|\ge R$}.
\end{equation*}
If we assume $|x|+R\le t$, which implies $|x-y| \le t$ for $|y| \le R$, we get
\[
\int_{|x-y|\le t} g(y)dy = \norm{g}_{L^1(\R^2)},
\]
and then we obtain
\begin{equation}
\label{37}
w(x,t) \ge \frac{\l(0)\norm{g}_{L^1(\R^2)}}{2\sqrt{2}\pi\sqrt{t+R}\sqrt{t-|x|+R}} \e \quad\text{for $R \le |x| \le t-R$}.
\end{equation}

Defining the functional
\[
W(t) := \int_{\R^2} w(x,t)dx,
\]
we reach to
\begin{equation*}
W''(t) = Q(t)W(t) +\l^{-1}(t) \int_{\R^2} |w(x,t)|^2dx.
\end{equation*}
Noting that $W$ is also positive by Lemma \ref{lem:w_pos} (or by the fact that $W=\lambda F$), then we have
\[
W''(t) \ge \l^{-1}(t) \int_{\R^2} |w(x,t)|^2dx \ge \int_{R \le |x| \le t-R} |w(x,t)|^2dx \quad\text{for $t \ge 2R$},
\]
where we used the fact that $\l^{-1}(t) >1$. Plugging \eqref{37} into the right-hand side of the above inequality, we have
\[
W''(t) \ge \frac{\l(0)^2\norm{g}^2_{L^1(\R^2)}}{8\pi^2(t+R)}\e^2 \int_{R \le |x| \le t-R} \frac{1}{t-|x|+R}dx,
\]
which yields
\[
W''(t) \ge \frac{\l(0)^2 \norm{g}^2_{L^1(\R^2)}}{4\pi(t+R)}\e^2 \int_{R}^{t-R} \frac{r}{t-r+R}dr \quad\text{for $t \ge 2R$}.
\]
Then, the rest of the demonstration is exactly the same as that of Theorem 4.1 in \cite{T}, and we omit the details here.

\begin{remark}
	We want to emphasize that the results stated in our four Theorems are still true if we have no damping term, that is if $\mu_1=0$. In fact, a key point in our proofs was to introduce multipliers to absorb this term. If $\mu_1=0$, then $m\equiv\lambda\equiv1$ and the demonstrations proceed analogously. In this case we do not need any additional condition on $\mu_2$ in Theorem \ref{thm4}, but it is enough to ask $\mu_2>0$.
\end{remark}

\section*{Acknowledgement}
The first author is partially supported by Zhejiang Province Science Foundation (LY18A010008), NSFC (11501273, 11726612), Chinese Postdoctoral Science Foundation (2017M620128, 2018T110332), CSC(201708330548), the Scientific Research Foundation of the First-Class Discipline of Zhejiang Province (B)(201601).
The second author is partially supported by the Global Thesis study award in 2016--2017, University of Bari. And he is also grateful to Future University Hakodate for hearty hospitality during his stay
there, 12/01/2018--04/04/2018.
The third author is partially supported by the Grant-in-Aid for Scientific Research (C) (No.15K04964)
and (B)(No.18H01132),
Japan Society for the Promotion of Science, and Special Research Expenses in FY2017, General Topics
(No.B21), Future University Hakodate.
This work started when the third author was working in Future University Hakodate.
%
%

\begin{thebibliography}{99.}%
	
\bibitem{DABI} D'Abbicco, M.:
The threshold of effective damping for semilinear wave equations.
Math. Methods Appl. Sci. {\bf 38}, 1032-1045 (2015)

\bibitem{DL1} D'Abbicco, M., Lucente, S.:
A modified test function method for damped wave equations.
Adv. Nonlinear Stud. {\bf 13}, 867-892 (2013)

\bibitem{DLR13} D'Abbicco, M., Lucente, S., Reissig, M.:
Semi-linear wave equations with effective damping.
Chin. Ann. Math. Ser. B {\bf 34}, 345-380 (2013)

\bibitem{DLR15} D'Abbicco, M., Lucente, S., Reissig, M.:
A shift in the Strauss exponent for semilinear wave equations with a not effective damping.
J. Differ. Equ. {\bf 259}, 5040-5073 (2015)

\bibitem{FIW} Fujiwara, K., Ikeda, M., Wakasugi, Y.:
Estimates of lifespan and blow-up rate for the wave equation with a time-dependent damping and a power-type nonlinearity.
Funkcial. Ekvac., to appear. arXiv:1609.01035

\bibitem{II} Ikeda, M., Inui, T.:
The sharp estimate of the lifespan for the semilinear wave equation	with time-dependent damping. arXiv:1707.03950 (2017)

\bibitem{IO} Ikeda, M., Ogawa, T.:
Lifespan of solutions to the damped wave equation with a critical nonlinearity.
J. Differ. Equ. {\bf 261}, 1880-1903 (2016)

\bibitem{IS} Ikeda, M., Sobajima, M.:
Life-span of solutions to semilinear wave equation with time-dependent critical damping for specially localized initial data.
Math. Ann., (2018), https://doi.org/10.1007/s00208-018-1664-1.

\bibitem{IWnew} Ikeda, M., Wakasugi, Y.:
Global well-posedness for the semilinear damped wave equation with time dependent damping in the overdamping case.
arXiv:1708.08044 (2017)

\bibitem{LTW} Lai, N.-A., Takamura, H., Wakasa, K.:
Blow-up for semilinear wave equations with the scale invariant damping and super-Fujita exponent.
J. Differ. Equ. {\bf 263(9)}, 5377-5394 (2017)

\bibitem{LT} Lai, N.-A., Takamura, H.:
Blow-up for semilinear damped wave equations with subcritical exponent in the scattering case.
Nonlinear Anal. {\bf 168}, 222-237 (2018)

\bibitem{LNZ12} Lin, J., Nishihara, K., Zhai, J.:
Critical exponent for the semilinear wave equation with time-dependent damping.
Discrete Contin. Dyn. Syst. Ser. A {\bf 32}, 4307-4320 (2012)

\bibitem{NPR} Nunes do Nascimento, W., Palmieri, A., Reissig, M.:
Semi-linear wave models with power non-linearity and scale-invariant time-dependent mass and dissipation.
Math. Nachr. (2016) doi: 10.1002/mana.201600069

\bibitem{N1} Nishihara, K.:
Asymptotic behaviour of solutions to the semilinear wave equation with time-dependent damping.
Tokyo J. Math. {\bf 34}, 327-343 (2011)

\bibitem{Pal} Palmieri, A.:
Global existence of solutions for semi-linear wave equation with scale-invariant damping and mass in exponentially weighted spaces.
submitted (2017)

\bibitem{P-R} Palmieri, A., Reissig, M.:
Fujita versus Strauss-a never ending story.
arXiv:1710.09123v1

\bibitem{T} Takamura, H.:
Improved Kato's lemma on ordinary differential inequality and its application to semilinear wave equations.
Nonlinear Anal. Theory Methods Appl. {\bf 125}, 227-240 (2015)

\bibitem{TL1} Tu, Z., Lin, J.:
A note on the blowup of scale invariant damping wave equation with sub-Strauss exponent.
arXiv:1709.00866

\bibitem{TL2} Tu, Z., Lin, J.:
Life-span of semilinear wave equations with scale-invariant damping: critical Strauss exponent case.
arXiv:1711.00223

\bibitem{WY14_scale} Wakasugi, Y.:
Critical exponent for the semilinear wave equation with scale invariant damping.
In: Fourier analysis, 375-390, Trends Math. Birkh\"{a}user/Springer, Cham, (2014)

\bibitem{WY17} Wakasugi, Y.:
Scaling variables and asymptotic profiles for the semilinear damped wave equation with variable coefficients.
J. Math. Anal. Appl. {\bf 447}, 452-487 (2017)

\bibitem{Wir1} Wirth, J.:
Solution representations for a wave equation with weak dissipation.
Math. Methods Appl. Sci. {\bf 27}, 101-124 (2004)

\bibitem{Wir2} Wirth, J.:
Wave equations with time-dependent dissipation. I. Non-effective dissipation.
J. Differ. Equ. {\bf 222}, 487-514 (2006)

\bibitem{Wir3} Wirth, J.:
Wave equations with time-dependent dissipation. II. Effective dissipation.
J. Differ. Equ. {\bf 232},  74-103 (2007)

\bibitem{YZ06} Yordanov, B., Zhang, Q.S.:
Finite time blow up for critical wave equations in high dimensions.
J. Funct. Anal. {\bf 231}, 361-374 (2006)

\bibitem{Z} Zhang, Q.S.:
A blow-up result for a nonlinear wave equation with damping: the critical case.
C. R. Math. Acad. Sci. Paris, S\'er. I {\bf 333}, 109-114 (2001)

\end{thebibliography}
%

\end{document}